\numberwithin{equation}{section}
\newtheorem{theorem}{Theorem}[section]
\newtheorem{corollary}[theorem]{Corollary}
\newtheorem{proposition}[theorem]{Proposition}
\newcommand{\e}{\epsilon}
\newcommand{\Q}{{\mathcal Q}}
\newcommand{\Qbar}{\overline{\mathcal Q}}
\newcommand{\M}{{\mathcal M}}
\newcommand{\Z}{{\mathbb Z}}
\newcommand{\T}{{\mathcal T}}
\newcommand{\teichmuller}{Teichm{\"u}ller{ }}
\newcommand{\Teich}{{\mathcal T}}
\renewcommand{\ge}{\geqslant}
 \let\c@theorem=\c@subsection
 \let\c@conjecture=\c@subsection
 \let\c@lemma=\c@subsection
 \let\c@proposition=\c@subsection
 \let\c@claim=\c@subsection
 \let\c@question=\c@subsection
 \let\c@criterion=\c@subsection
 \let\c@vfconj=\c@subsection
 \let\c@definition=\c@subsection
 \let\c@notation=\c@subsection
 \let\c@remark=\c@subsection
 \let\c@example=\c@subsection
 \let\c@equation=\c@subsection
 \let\c@figure=\c@subsection
 \let\c@wrapfigure=\c@subsection
\begin{document}

\title{Recurrence of quadratic differentials for harmonic measure.}

\author[Gadre]{Vaibhav Gadre}
\address{\hskip-\parindent
        School of Mathematics and Statistics\\
        University of Glasgow\\
        University Place\\
        Glasgow G12 8SQ UK}
\email{Vaibhav.Gadre@glasgow.ac.uk}
\thanks{The first author acknowledges support from the GEAR Network
  (U.S. National Science Foundation grants DMS 1107452, 1107263,
  1107367 ``RNMS: GEometric structures And Representation varieties'').}

\author[Maher]{Joseph Maher}
\address{\hskip-\parindent
  Department of Mathematics, College of Staten Island, CUNY \\
  2800 Victory Boulevard, Staten Island, NY 10314, USA \\
  and Department of Mathematics, 4307 Graduate Center, CUNY \\
  365 5th Avenue, New York, NY 10016, USA}
\email{joseph.maher@csi.cuny.edu }
\thanks{The second author acknowledges support from the Simons
  Foundation and PSC-CUNY}

\maketitle

\begin{abstract}
We consider random walks on the mapping class group that have finite
first moment with respect to the word metric, whose support generates
a non-elementary subgroup and contains a pseudo-Anosov map whose
invariant \teichmuller geodesic is in the principal stratum of
quadratic differentials. We show that a \teichmuller geodesic typical
with respect to the harmonic measure for such random walks, is
recurrent to the thick part of the principal stratum. As a
consequence, the vertical and horizontal foliations of such a random
\teichmuller geodesic have no saddle connections.
\end{abstract} 

\section{Introduction}

Let $S$ be an orientable surface of finite type.  Let $\text{Mod}(S)$
be the mapping class group of orientation preserving diffeomorphisms
of $S$ modulo isotopy.  Let $\Teich(S)$ be the \teichmuller space of
marked conformal structures on $S$, and the moduli space
$\mathcal{M}(S)$ of Riemann surfaces is the quotient $\Teich(S)/
\text{Mod}(S)$. Let $\mathcal{Q}(S)$ be the space of unit area
quadratic differentials, which may be identified with the unit
cotangent bundle of $\Teich(S)$.  A unit area quadratic differential
determines a unit area flat metric, and we shall only ever consider
flat metrics which have unit area.  We shall write $\pi$ for the
projection map $\pi \colon \Q(S) \to \T(S)$ which sends a quadratic
differential to the underlying Riemann surface.  For punctured
surfaces, the quadratic differentials in $\mathcal{Q}(S)$ are assumed
to be meromorphic with poles only at punctures and with every puncture
a simple pole. The space $\mathcal{Q}(S)$ is stratified by the order
of the zeros of the quadratic differential; the principal stratum
$\Q_{pr}(S)$ consists of those quadratic differentials whose zeros are
all simple.  For the remainder of this paper we shall assume that we
have fixed the surface $S$, and so we shall omit it from our notation,
and just write $\T$ for $\T(S)$, and so on.

The $\epsilon$-thick part of Teichm\"uller space $\T$, which we shall
denote $\T(\e)$, is the collection of all conformal structures
corresponding to hyperbolic metrics in which no simple closed curve
has length less than $\e$.  The complement $\T \setminus \T_\e$ is
called the $\e$-thin part of Teichm\"uller space.  The thick part
$\T(\e)$ is mapping class group invariant, and we shall write $\M(\e)$
for the quotient, which is a subset of moduli space $\M$ and is called
the $\e$-thick part of moduli space.  The $\epsilon$-thick part of a
connected component of a stratum is the set of all quadratic
differentials in the component for which the $q$-length of all saddle
connections is at least $\epsilon$.  The principal stratum is
connected, and we shall write $\Q_{pr}(\e)$ for the $\e$-thick part of
the principal stratum.  Maskit \cite{maskit} showed that the existence
of a short curve in a hyperbolic metric implies that the curve is
short in any compatible unit area flat metric.  More precisely, given
$\e > 0$, there is an $\e' > 0$, such that for any hyperbolic metric
$X$ in the thin part $\T \setminus \T(\e)$, and for any flat metric in
the same conformal class as $X$, the length of any curve in the flat
metric metric is at most $\e'$, i.e. $\pi^{-1}( \T \setminus \T(\e) )
\subset \Q \setminus \Q(\e') $.  A simple closed curve $\alpha$ either
has a unique geodesic representative in the flat metric, which is a
concatenation of saddle connections, or else there is a maximal flat
cylinder on $S$ foliated by parallel closed geodesics.  In the latter
case, the boundary curves of the cylinder will contain singularities,
and hence saddle connections.  In either case, the existence of a
short curve in the flat metric implies the existence of a short saddle
connection.  However, even if there are no short simple closed curves,
there may be arbitrarily short saddle connections.

In summary, the thick part of a strata of quadratic differentials has
a projection into moduli space that is contained in a thick part of
moduli space, i.e. for any $\e > 0$ there is an $\e' > 0$ such that
$\pi( Q_{pr}(\e') ) \subset \T(\e)$.  We remark however, that any
point in $\T$ has a pre-image in $\Q$ which contains points which do
not lie in the thick part of the principal strata.

By the Thurston classification, mapping classes are periodic,
reducible or pseudo-Anosov. A pseudo-Anosov map $g$ has a unique
invariant \teichmuller geodesic $\gamma_g$.  Given a point $X \in
\gamma_g$ there is a unique quadratic differential $q$ at $X$ in the
direction of $\gamma_g$.  If the invariant \teichmuller geodesic is
given by a quadratic differential that lies in the principal stratum,
then we say that the pseudo-Anosov map is in the principal stratum.

We consider random walks on the mapping class group $\text{Mod}(S)$
that have finite first moment with respect to word metric and whose
support generates a non-elementary subgroup of $\text{Mod}(S)$,
i.e. the subgroup generated by the support of the initial distribution
contains a pair of pseudo-Anosov maps with distinct stable and
unstable measured foliations. In independent work, Maher \cite{Mah}
and Rivin \cite{Riv} showed that the probability that a random walk
gives a pseudo-Anosov map tends to $1$ in the length of the sample
path, and in particular, the invariant foliations of pseudo-Anosov
elements do not contain saddle connections.  As a refinement of these
results, we showed the following in \cite{Gad-Mah}, answering a
question of Kapovich and Pfaff \cite{Kap-Pfa}:

\begin{theorem}\label{principal} 
Let $S$ be a connected orientable surface of finite type, whose
\teichmuller space $\Teich(S)$ has complex dimension at least two. Let
$\mu$ be a probability distribution on $\text{Mod}(S)$ such that
\begin{enumerate}
\item $\mu$ has finite first moment with respect to $d_{\text{Mod}}$,
\item $\text{Supp}(\mu)$ generates a non-elementary subgroup $H$ of
$\text{Mod}(S)$, and
\item The semigroup generated by $\text{Supp}(\mu)$ contains a
pseudo-Anosov $g$ such that the invariant \teichmuller geodesic
$\gamma_g$ for $g$ lies in the principal stratum of quadratic
differentials.
\end{enumerate}
Then, for almost every bi-infinite sample path $\omega = (w_n)_{n \in
  \Z}$, there is positive integer $N$ such that for all $n \geqslant
N$ the mapping class $w_n$ is a pseudo-Anosov map in the principal
stratum, that is its invariant \teichmuller geodesic is given by a
quadratic differential with simple zeros and poles. Furthermore,
almost every bi-infinite sample path determines a unique \teichmuller
geodesic $\gamma_\omega$ with the same limit points as the bi-infinite
sample path, and this geodesic also lies in the principal stratum.
\end{theorem}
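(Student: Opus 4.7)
The plan has three ingredients: (i) show that almost every bi-infinite sample path converges in $\PMF$ to a pair of distinct uniquely ergodic measured foliations $(\lambda_-(\omega), \lambda_+(\omega))$ that determine a unique \teichmuller geodesic $\gamma_\omega$; (ii) show that the harmonic measure $\nu$ gives full measure to the set $\mathcal{P}_+ \subset \PMF$ of foliations arising as vertical foliations of principal-stratum quadratic differentials, so that $\gamma_\omega$ lies in the principal stratum; and (iii) transfer the principal-stratum conclusion from $\gamma_\omega$ to the axes of the pseudo-Anosov representatives $w_n$. Ingredient (i) is available from Kaimanovich--Masur and Maher's work on harmonic measure for random walks on the mapping class group, so the novelty is concentrated in (ii) and (iii).

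For (ii), $\mathcal{P}_+$ is $\Mod$-invariant, in particular invariant under the subgroup $H$ generated by $\text{Supp}(\mu)$. Under hypotheses (1) and (2), $\nu$ is the unique $\mu$-stationary Borel probability measure on $\PMF$ and $(\PMF,\nu)$ is $H$-ergodic, so $\nu(\mathcal{P}_+) \in \{0,1\}$. The pseudo-Anosov $g$ from hypothesis (3) lies in the semigroup generated by $\text{Supp}(\mu)$, and its attracting fixed point $F^+_g \in \PMF$ is the vertical foliation of a principal-stratum quadratic differential, hence $F^+_g \in \mathcal{P}_+$. Because $g$ is in the semigroup of $\text{Supp}(\mu)$ and $\nu$ is $\mu$-stationary, $F^+_g$ lies in the topological support of $\nu$. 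To upgrade this to $\nu(\mathcal{P}_+) > 0$, I would show that $\mathcal{P}_+$ contains a $\PMF$-neighbourhood of $F^+_g$: the principal stratum $\Q_{pr}$ is open in $\Q$, and a sufficiently small neighbourhood in $\Q$ of the quadratic differential associated to $F^+_g$ projects to a $\PMF$-neighbourhood still contained in $\mathcal{P}_+$. Positivity then forces $\nu(\mathcal{P}_+) = 1$ by ergodicity. Running the same argument with the reflected distribution $\rmu$ gives the analogous statement $\rnu(\mathcal{P}_-) = 1$ on the backward boundary, so both $\lambda_\pm(\omega)$ almost surely lie in the respective $\mathcal{P}_\pm$ and $\gamma_\omega$ lies in the principal stratum.

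For (iii), Maher's theorem gives that $w_n$ is pseudo-Anosov for all large $n$, and a product-of-contractions argument in $\PMF$ shows that the attracting and repelling fixed points of $w_n$ converge to $\lambda_+(\omega)$ and $\lambda_-(\omega)$. Unique ergodicity of the limits then yields convergence of the axes $\gamma_{w_n} \to \gamma_\omega$ uniformly on compact subsets of $\Teich$. Stratum membership is constant along a \teichmuller geodesic and the principal stratum is open in $\Q$, so eventually $\gamma_{w_n}$ meets, and hence lies entirely in, the principal stratum, making $w_n$ a pseudo-Anosov in the principal stratum.

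The main obstacle I anticipate is the openness step in (ii): the assertion that $\mathcal{P}_+$ contains a $\PMF$-neighbourhood of $F^+_g$. The projection $\Q \to \PMF$ sending a differential to its vertical foliation is not a homeomorphism onto its image, so a priori foliations in different strata could be approximated in $\PMF$ by principal-stratum foliations. Ruling out the reverse direction, that every small $\PMF$-perturbation of a principal-stratum vertical foliation remains in $\mathcal{P}_+$, requires a careful train-track or Delaunay-triangulation argument controlling how the singular structure of the foliation varies under perturbation. Once this openness is in place, the remainder of the proof follows a standard ergodic-plus-shadowing pattern.
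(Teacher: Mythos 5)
Your ingredient (i) and the ergodicity framework in (ii) are fine, but the step you yourself flag as the main obstacle --- that $\mathcal{P}_+$ contains a $\PMF$-neighbourhood of $F_g^+$ --- is not merely delicate, it is false, and no train-track or Delaunay argument will rescue it. The set $\mathcal{P}_+$ has empty interior in $\PMF$: if $g'$ is any pseudo-Anosov whose invariant foliations have a singularity with four or more prongs (such elements exist whenever $\dim_{\C}\Teich(S)\ge 2$ and a non-principal stratum is non-empty), then the $\Mod(S)$-orbit of $F_{g'}^+$ is dense in $\PMF$ (the action on $\PMF$ is minimal, since pseudo-Anosov fixed-point pairs are dense and north--south dynamics pushes any orbit closure onto them), and every point of that orbit is the uniquely ergodic vertical foliation of a quadratic differential with a higher-order zero, hence lies outside $\mathcal{P}_+$. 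Weighted multicurves are likewise dense and also fail to lie in $\mathcal{P}_+$. So every $\PMF$-neighbourhood of $F_g^+$ meets the complement of $\mathcal{P}_+$, the positivity step collapses, and with it the whole of (ii). The underlying reason is the one you half-identify: singularity data of a measured foliation is not a continuous (or even locally constant) function on $\PMF$, so openness of $\Q_{pr}$ in $\Q$ does not descend to openness of $\mathcal{P}_+$ in $\PMF$.

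Note that the present paper does not reprove Theorem \ref{principal}; it quotes it from \cite{Gad-Mah}. The actual argument there, whose mechanism is visible in Sections \ref{fellow-thick}--3 of this paper, does the measure-theoretic work in $\Q$ rather than in $\PMF$, precisely to avoid the failure above. Ergodicity of the shift map on the bi-infinite path space shows that for almost every $\omega$ the tracked geodesic $\gamma_\omega$ passes through $\Gamma_r(w_m g^{-k}X, w_m g^k X)$ for infinitely many $m$, hence (by Rafi's fellow-travelling result in the thick part) $R$-fellow-travels translates of the axis $\gamma_g$ over arbitrarily long segments. Convergence on compact sets, together with unique ergodicity of the limiting foliations, then forces a quadratic differential \emph{along} $\gamma_\omega$ to converge in $\Q$ to one on a translate of $\gamma_g$; openness of $\Q_{pr}$ in $\Q$ --- which is true, unlike openness of $\mathcal{P}_+$ in $\PMF$ --- plus invariance of the stratum under the Teichm\"uller geodesic flow then places all of $\gamma_\omega$ in the principal stratum. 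Your ingredient (iii) already contains the right local ingredients (convergence of axes in $\Q$, openness of $\Q_{pr}$, constancy of the stratum along a geodesic); the fix is to run that style of argument against translates of $\gamma_g$ via shift-ergodicity, rather than trying to establish $\nu(\mathcal{P}_+)=1$ by a boundary argument on $\PMF$ alone.
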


In this note, we prove the following recurrence result, answering a
further question of Algom-Kfir, Kapovich and Pfaff \cite{akp}:

\begin{theorem} \label{main}%
Let $S$ and $\mu$ satisfy the hypothesis of Theorem
\ref{principal}. Then there exists $\epsilon(S,\mu)> 0$ such that
almost every bi-infinite sample path $\omega = (w_n)_{n \in \Z}$
determines a unique \teichmuller geodesic $\gamma_\omega$ in the
principal stratum of quadratic differentials with the same limit
points in $\text{PMF}(S)$ as $\omega$, and moreover $\gamma_\omega$ is
recurrent to the $\epsilon$-thick part of the principal stratum.
\end{theorem}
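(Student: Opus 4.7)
The plan is to combine the existence of $\gamma_\omega$ in the principal stratum (Theorem~\ref{principal}) with ergodicity of the shift on the space of bi-infinite sample paths. The shift intertwines, up to bounded error, with translation along $\gamma_\omega$, so ergodicity converts ``positive probability of being in the thick part at a reference point'' into ``recurrence to the thick part at a positively dense set of times''.

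First I would fix a basepoint $o \in \T$, and for each $\omega$ to which Theorem~\ref{principal} applies let $q_\omega \in \Q_{pr}$ be the quadratic differential at a measurable choice of the nearest-point projection of $o$ onto $\gamma_\omega$, oriented toward the forward limit point $\nu_+(\omega) \in \PMF$. Setting $A_\e = \{\omega : q_\omega \in \Q_{pr}(\e)\}$, the monotone increase $\Q_{pr}(\e) \nearrow \Q_{pr}$ as $\e \to 0$, together with $\mathbb{P}(q_\omega \in \Q_{pr}) = 1$ from Theorem~\ref{principal}, yields some $\e_0 > 0$ with $\mathbb{P}(A_{\e_0}) > 0$. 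This $\e_0$ will be the uniform constant $\e(S,\mu)$ of the theorem.

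Next, write $\omega = (g_n)_{n \in \Z}$ and $w_n = g_1 \cdots g_n$, and let $T$ be the shift on the increments, which preserves the product measure $\mu^{\otimes \Z}$ and is ergodic (in fact Bernoulli). One verifies that $\gamma_{T^n \omega} = w_n^{-1} \gamma_\omega$, so the projection of $o$ onto $\gamma_{T^n \omega}$ corresponds, after applying $w_n$, to the projection of $w_n o$ onto $\gamma_\omega$. By positive drift of $\{w_n o\}$ in $\T$, a consequence of the finite first moment and non-elementary hypotheses, this second projection advances along $\gamma_\omega$ at linear speed in $|n|$. Applying Birkhoff's ergodic theorem to $\mathbf{1}_{A_{\e_0}}$ then gives, for almost every $\omega$, a positive-density set of $n \ge 0$ with $q_{T^n \omega} \in \Q_{pr}(\e_0)$; since $\Q_{pr}(\e_0)$ is $\Mod(S)$-invariant, this yields an unbounded sequence of points along $\gamma_\omega$ whose images in moduli space lie in the $\e_0$-thick part of the principal stratum. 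The same argument applied to $T^{-1}$ treats the negative direction, establishing the claimed recurrence.

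The main obstacle I expect is rigorously setting up the intertwining in the previous paragraph: verifying measurability of $\omega \mapsto q_\omega$, handling the non-uniqueness of nearest-point projection onto a Teichm\"uller geodesic, and controlling the displacement along $\gamma_\omega$ of the projection of $w_n o$ in terms of $n$. The latter requires combining positive drift in $\T$ with a sublinear-tracking property for sample paths along $\gamma_\omega$, in the spirit of the constructions used in \cite{Gad-Mah}. Once the intertwining is in place, the ergodic-theoretic core of the argument is routine, and the uniform constant $\e(S,\mu)$ is simply the $\e_0$ produced above.
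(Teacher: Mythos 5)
Your argument is correct in outline, but it takes a genuinely different route from the paper. The paper's proof leans on hypothesis (3) of Theorem \ref{principal} directly: using the sets $\Omega_k$ from \cite{Gad-Mah} and ergodicity of the shift (Proposition \ref{fellow}), it finds times $m_k$ at which $w_{m_k}^{-1}\gamma_\omega$ $R$-fellow travels longer and longer segments $[g^{-k}X, g^kX]$ of the axis of the thick principal pseudo-Anosov $g$, and then a geometric limit argument (Proposition \ref{subseq}, resting on unique ergodicity of the foliations of $\gamma_g$ and uniqueness of Teichm\"uller geodesics) forces the midpoint quadratic differentials into a fixed compact subset of $\Qbar_{pr}$; the constant $\e$ thus comes from the thickness of $\gamma_g$ and Rafi's fellow-traveling constant. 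You instead run a Kaimanovich--Masur style argument \cite{Kai-Mas}: extract $\e_0$ with $\mathbb{P}(A_{\e_0})>0$ from the almost-sure conclusion of Theorem \ref{principal} by continuity of measure, then use Birkhoff plus the equivariance $\gamma_{T^n\omega} = w_n^{-1}\gamma_\omega$ and positive drift to push the thick event to an unbounded (indeed positive-density) set of locations along $\gamma_\omega$. This is softer and arguably cleaner given Theorem \ref{principal} as a black box, and it yields the slightly stronger ``positive density of recurrence times'' statement; the trade-off is that your $\e_0$ is non-constructive, and you need two inputs the paper's route avoids: a measurable, canonically chosen basepoint on $\gamma_\omega$ (non-uniqueness of nearest-point projection in $\T$ is real, but can be sidestepped by using, say, the balance point of the pair $(F_-,F_+)$), and the fact that the projections of $w_no$ advance linearly along $\gamma_\omega$, which requires sublinear tracking for finite first moment walks and not merely positive drift --- positive drift alone does not rule out escape transverse to the geodesic. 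You flag both points yourself, and both are standard in this setting, so I regard the proposal as a valid alternative proof rather than a gapped one.
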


Recurrence to the thick part of the moduli space $\mathcal{M}$ is
shown in Kaimanovich-Masur \cite{Kai-Mas} and does not require the
extra hypothesis that the subgroup generated by $\text{Supp}(\mu)$
contains a pseudo-Anosov in the principal stratum. With this extra
hypothesis, Theorem \ref{main} is a finer recurrence statement and
implies their result. A consequence of Theorem \ref{main} and
\cite[Theorem 1]{Mas} is the following refinement of Theorem
\ref{principal}.

\begin{corollary}\label{no-saddles}
Let $S$ and $\mu$ satisfy the hypothesis of Theorem
\ref{principal}. Then almost every bi-infinite sample path $\omega$
determines a unique \teichmuller geodesic $\gamma_\omega$ in the
principal stratum of quadratic differentials with the same limit
points as $\omega$, and the vertical and horizontal projective
measured foliations corresponding to $\gamma_\omega$ are uniquely
ergodic with no vertical and horizontal saddle connections.
\end{corollary}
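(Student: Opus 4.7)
\emph{Proof plan.} The strategy is to feed the refined recurrence statement of Theorem~\ref{main} into Masur's criterion \cite[Theorem 1]{Mas}, and then to obtain the absence of saddle connections directly from the fact that recurrence takes place in the thick part of the \emph{stratum}, not just of moduli space.

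By Theorem \ref{main}, for almost every bi-infinite sample path $\omega$ the geodesic $\gamma_\omega$ exists, lies in the principal stratum, and is recurrent to $\Q_{pr}(\e)$ under the forward Teichm\"uller flow. Because the hypotheses on $\mu$ are invariant under passing to the reflected walk, the same conclusion applied to the reflected walk gives recurrence of $\gamma_\omega$ to $\Q_{pr}(\e)$ in the backward direction as well. Using the inclusion $\pi(\Q_{pr}(\e)) \subset \T(\e')$ recalled in the introduction, the projection of $\gamma_\omega$ to $\M$ is recurrent to $\M(\e')$ in both directions.

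Unique ergodicity of the vertical foliation then follows immediately from Masur's criterion applied to the forward ray, and unique ergodicity of the horizontal foliation from the same criterion applied to the backward ray.

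To rule out saddle connections, I would use the elementary observation that under the Teichm\"uller flow $g_t$ the $q$-length of any vertical saddle connection $\sigma$ in $q_\omega$ is $e^{-t}\ell(\sigma)$. If $\sigma$ were present, then for all sufficiently large $t$ the quadratic differential $g_t q_\omega$ would contain a saddle connection of length less than $\e$, contradicting recurrence to $\Q_{pr}(\e)$. The analogous contraction of horizontal saddle connections under the backward flow combined with backward recurrence to $\Q_{pr}(\e)$ excludes horizontal saddle connections. I do not expect any genuine obstacle here: the whole content is in Theorem~\ref{main}, and the corollary is a formal consequence of the fact that the recurrence provided by that theorem is quantitative at the level of saddle connection lengths rather than merely hyperbolic systole.
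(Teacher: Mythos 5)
Your proof is correct and follows essentially the same route as the paper: feed the recurrence to $\Qbar_{pr}(\e)$ from Theorem~\ref{main} into Masur's criterion for unique ergodicity, and rule out vertical (resp.\ horizontal) saddle connections because their flat lengths contract to zero under the forward (resp.\ backward) Teichm\"uller flow, contradicting recurrence to the thick part of the stratum. If anything, you are slightly more careful than the published proof, which treats only the vertical foliation explicitly and leaves the backward/horizontal case (via the reflected walk) implicit.
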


This corollary follows from the fact that if a quadratic differential
has a saddle connection which is contained in a leaf of the horizontal
or vertical foliations, then the length of this saddle connection
tends to zero in one direction along the geodesic, and so the geodesic
cannot be recurrent to the thick part of a strata.  Corollary
\ref{no-saddles} implies that if one passes from measured foliations
to measured laminations then the lamination given by $\gamma_\omega$
are principal i.e., they have all complementary regions ideal
triangles or once-punctured monogons.

The proof of the recurrence result, Theorem \ref{main}, follows from
the fellow traveling discussion in Section \ref{fellow-thick} below
and the ergodicity of the shift map on $\text{Mod}(S)$.

\subsection{Acknowledgements}

We would like to thank Saul Schleimer for helpful conversations.

\section{Fellow traveling and thickness}\label{fellow-thick}

Let $\mathcal{Q}_{\text{pr}}$ be the principal stratum of quadratic
differentials. Let $\mathcal{Q}_{\text{pr}}(\epsilon)$ be the set of
principal quadratic differentials $q$ for which every saddle
connection $\beta$ on $q$ satisfies $\ell_q(\beta) \ge \epsilon$ in
the induced unit area flat metric on $S$.  We shall write $\Qbar_{pr}$
for the quotient of $\Q_{pr}$ by the mapping class group.

A quadratic differential $q$ determines a Teichmuller geodesic
$\gamma$ in $\T$, and we shall write $\widetilde \gamma$ for the
corresponding image of $q$ in $Q$ under the geodesic flow, which
projects down to $\gamma$.  Given a quadratic differential $q$, we
shall parameterize the corresponding geodesic by setting $q(0) = q$
and $\gamma(0) = \pi( q(0) )$.  We shall write $\gamma_t$ for the
point in $\T$ distance $t$ along the geodesic in $\T$, and $q(t)$ for
the corresponding point in $\widetilde \gamma$, so $\gamma(t) = \pi(
q(t) )$.

We say a Teichm\"uller geodesic $\gamma$ is \emph{recurrent in $\M$ in
  the forward direction} if there is a compact set $K$ in $\M$, and a
sequence of points $t_n \to \infty$, such that $\gamma({t_n}) \in K$.
For any compact set in $\M$ there is an $\e > 0$ such that $K$ is
contained in the $\e$-thick part of $\M$, so recurrent in $\M$ implies
recurrence to $\M(\e)$ for some $\e > 0$.  Masur \cite{Mas} showed
that if $\gamma$ is recurrent in $\M$, then $\gamma$ has a uniquely
ergodic vertical foliation.  We say a Teichm\"uller geodesic $\gamma$
is \emph{recurrent in $\Qbar_{pr}$ in the forward direction} if there
is a compact set $K$ in $\Qbar_{pr}$, and a sequence of points $t_n
\to \infty$, such that $q(t_n) \in K$.  Any compact set in
$\Qbar_{pr}$ is contained in $\Qbar_{pr}(\e)$ for some $\e > 0$, so
recurrence in $\Qbar_{pr}$ implies recurrence to the thick part
$\Qbar_{pr}(\e)$, for some sufficiently small $\e$.  Recurrence in
$\Qbar_{pr}$ implies recurrence in $\M$, and furthermore recurrence in
$\Qbar_{pr}$ implies that the vertical foliation of $\gamma$ contains
no saddle connections, as the length of a vertical saddle connection
tends to zero as $t \to \infty$.

\begin{proposition}\label{subseq}
Suppose that a \teichmuller geodesic $\gamma$, determined by a
quadratic differential $q_0 \in \Q_{pr}(\e)$, is recurrent to
$\Qbar_{pr}(\e)$ in both the forwards and backwards directions.
Suppose $\tau_n$ is sequence of \teichmuller geodesic segments that
$R$-fellow travel $\gamma$ for distance $d_n$ such that the midpoints
$X_n$ of $\tau_n$ are within \teichmuller distance $R$ of $X_0$ and
$d_n \to \infty$.  Let $q_n$ be the quadratic differential at $X_n$
corresponding to $\gamma_n$.  Then there exists $\e' > 0$, depending
on $q_0$ and $R$, and a subsequence $n_k$ with $k \in \mathbb{Z}$ such
that $q_{n_k} \in \mathcal{Q}_{\text{pr}}(\e')$ as $k \to \pm \infty$.
\end{proposition}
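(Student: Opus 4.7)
The plan is to extract a convergent subsequence of the $q_n$, identify the limit $q_\infty$ as a point on the Teichm\"uller flow orbit of $q_0$, and then transfer thickness from $q_\infty$ to a tail of the subsequence by continuity.

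First I would invoke compactness: the midpoints $X_n$ lie in the compact closed ball $\overline{B(X_0, R)} \subset \T$, and the unit area quadratic differentials over this ball form a compact subset of $\Q$, so after passing to a subsequence $q_{n_k} \to q_\infty$ with $X_{n_k} \to X_\infty = \pi(q_\infty)$ at Teichm\"uller distance at most $R$ from $X_0$. By continuous dependence of the Teichm\"uller geodesic flow on initial conditions, the bi-infinite geodesic $\tau_\infty$ determined by $q_\infty$ satisfies $\tau_\infty(t) = \lim_k \tau_{n_k}(t)$ for every fixed $t \in \R$. Since each $\tau_{n_k}$ $R$-fellow travels $\gamma$ on an interval of length $d_{n_k} \to \infty$ centered at its midpoint, $\tau_\infty$ remains within bounded Teichm\"uller distance of $\gamma$ for all $t \in \R$.

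The hard step is to upgrade bounded shadowing of $\gamma$ to the statement that $\tau_\infty$ is a reparameterization of $\gamma$. The hypothesis that $\gamma$ is recurrent to $\Qbar_{pr}(\e)$ in both directions implies, via Masur's criterion \cite{Mas}, that both the horizontal and vertical foliations of $q_0$ are uniquely ergodic. A standard argument comparing the exponential decay of extremal length of $[F_{q_0}^+]$ along $\gamma$ with the comparable behavior at bounded-distance points on $\tau_\infty$ then shows that the forward endpoint of $\tau_\infty$ in $\PMF$ coincides with that of $\gamma$, and symmetrically for the backward endpoint. Two Teichm\"uller geodesics with the same pair of uniquely ergodic endpoints differ only by a time shift along the flow, so $\tau_\infty$ must be a reparameterization of $\gamma$, and hence $q_\infty = q(s)$ for some $s \in [-R, R]$ lying on the flow orbit of $q_0$. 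I expect this rigidity step to be the main obstacle, since without the bi-directional thick recurrence a limit of geodesics shadowing $\gamma$ over longer and longer intervals need not coincide with $\gamma$.

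To finish, the principal stratum is flow-invariant, so $q(s) \in \Q_{pr}$ for every $s$; the shortest saddle connection length $s \mapsto \min_\beta \ell_{q(s)}(\beta)$ is continuous and strictly positive on the compact interval $[-R, R]$, so attains a positive minimum $\e''$, depending only on $q_0$ and $R$. Setting $\e' := \e''/2$, continuity of saddle connection lengths on $\Q$ ensures $q_{n_k} \in \Q_{pr}(\e')$ for all sufficiently large $k$. If the sequence $(\tau_n)$ is indexed by $\Z$ with $d_n \to \infty$ as $|n| \to \infty$, running the same compactness argument separately on $n \to +\infty$ and $n \to -\infty$ yields a bi-infinite subsequence $(n_k)_{k \in \Z}$ with the required property.
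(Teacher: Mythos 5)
Your proposal is correct and follows essentially the same route as the paper: extract a convergent subsequence by compactness, use bi-directional thick recurrence plus Masur's criterion to get unique ergodicity of both foliations and hence identify the limiting geodesic with $\gamma$ (so the limit $q_\infty$ lies on the flow segment $\{q(s) : s \in [-R,R]\}$), and conclude by continuity of the shortest-saddle-connection length on a compact neighbourhood of that segment. The only cosmetic difference is in the identification step, where the paper argues via vanishing intersection number of the limiting foliations with $(F_s,F_u)$ rather than via extremal-length decay of the endpoints, but both reduce to the same unique-ergodicity rigidity.
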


\begin{proof}
As the Teichm\"uller geodesic $\gamma$ is recurrent to the thick part
$\Qbar_{pr}(\e)$, it is also recurrent to a thick part of $\M(\e_1)$
for some $\e_1 > 0$.  By work of Masur \cite{Mas}, as the
Teichm\"uller geodesic is recurrent in both directions, this implies
that both the vertical and horizontal foliations are uniquely ergodic.
As $\Qbar_{pr}$ is open, we may choose an open neighbourhood $U$ of
$\{ q_t \mid t \in (-R, R) \}$ in $\Qbar_{pr}$ which is contained in
$\Qbar_{pr}$, and whose closure $K = \overline{U}$ is also contained
in $\Qbar_{pr}$, and is compact.  In particular, there is an $\e_2 >
0$ such that $K \subset \Qbar_{pr}(\e_2)$.

By convergence on compact sets, one can pass to a subsequence of
$\tau_n$'s that converges to bi-infinite \teichmuller geodesic
$\gamma'$ whose vertical and horizontal foliations have intersection
number zero with the vertical and horizontal foliations $(F_s, F_u)$
of $\gamma$.  Hence, the vertical and horizontal foliations of
$\gamma'$ are also $F_s$ and $F_u$. Since a \teichmuller geodesic with
this foliation data has to be unique, $\gamma' = \gamma$. In
particular, by passing to a subsequence we get that $q_{n_k} \to
q_0(s)$ for some $s \in (-R, R)$.  So the tail of the sequence
$q_{n_k}$ must consists of quadratic differentials in $K \subset
\Qbar_{\text{pr}}$ and moreover in $\Qbar_{\text{pr}}(\e')$ as $k \to
\infty$ proving the proposition.
\end{proof}

Let $g$ be a pseudo-Anosov map whose invariant \teichmuller geodesic
$\gamma_g$ is in the principal stratum. Also suppose that $\epsilon$
has been chosen small enough such that $\gamma_g$ is contained in
$\mathcal{Q}_{\text{pr}} (\epsilon)$.

\begin{proposition}
Given a pseudo-Anosov element $g$ and a constant $R$, there is an $\e
> 0$, such that if $\gamma$ is a \teichmuller geodesic which has
sequences $T_n, d_n$ for $n \in \mathbb{N}$ such that
\begin{enumerate}
\item $T_n, d_n \to \infty$ as $n \to \infty$, and
\item there are mapping classes $h_n$ such that the geodesic
$\gamma_n= h_n (\gamma_g)$ has a segment that $R$-fellow travels
$\gamma_t$ over the time interval $(T_n - d_n, T_n+d_n)$.
\end{enumerate}
Then there is a subsequence $n_k$ such that $q_{T_{n_k}} \in
\mathcal{Q}_{\text{pr}}(\e)$.
\end{proposition}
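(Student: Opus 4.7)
The plan is to reduce to Proposition \ref{subseq} applied to $\gamma_g$, using mapping class group equivariance to replace the varying sequence of translates $\gamma_n = h_n(\gamma_g)$ by the single periodic geodesic $\gamma_g$.

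Since $g$ is pseudo-Anosov with invariant axis in the principal stratum, $\gamma_g$ descends to a closed orbit in moduli space and is contained in some $\Qbar_{pr}(\epsilon_0)$ with $\epsilon_0 = \epsilon_0(g) > 0$; in particular it is trivially recurrent to $\Qbar_{pr}(\epsilon_0)$ in both directions, so it satisfies the hypothesis of Proposition \ref{subseq}. For each $n$, let $\sigma_n$ denote the subsegment of $\gamma$ over $(T_n - d_n, T_n + d_n)$, and write $L$ for the translation length of $g$ on $\gamma_g$. Applying $h_n^{-1}$ converts the given fellow-traveling data to: the segment $h_n^{-1}(\sigma_n)$ $R$-fellow travels $\gamma_g = h_n^{-1}(\gamma_n)$ over an interval of length $2 d_n$. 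To further place the midpoints in a bounded region near a fixed basepoint of $\gamma_g$, I would compose with a power $g^{-k_n}$ chosen so that the midpoint of the fellow-traveled subsegment on $\gamma_g$ lands in a fixed compact fundamental domain $F$ for the cyclic $\langle g\rangle$-action. Setting $\phi_n = g^{-k_n} h_n^{-1}$ and $\tau_n = \phi_n(\sigma_n)$, the $\tau_n$ all $R$-fellow travel $\gamma_g$ over intervals of length $2 d_n \to \infty$, and their midpoints $\phi_n(\gamma(T_n))$ lie within Teichm\"uller distance $R + L$ of a fixed basepoint $X_0 \in F$.

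Proposition \ref{subseq} now applies to $\gamma_g$ and the family $\tau_n$ (the fellow-traveling and basepoint constants absorbed into $R' = R + L$, which still depends only on $g$ and $R$). It yields $\e > 0$ depending on $g$ and $R$, and a subsequence $n_k$ such that the midpoint quadratic differential $\phi_{n_k}(q_{T_{n_k}})$ of $\tau_{n_k}$ lies in $\Q_{pr}(\e)$. Since mapping classes act by isometries of the flat structure and in particular preserve every saddle connection length, this gives $q_{T_{n_k}} \in \Q_{pr}(\e)$ as required.

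The only genuine work beyond invoking Proposition \ref{subseq} is the equivariance bookkeeping: although the geodesics $\gamma_n$ vary with $n$, they are all mapping class group translates of the single periodic $\gamma_g$, and the finiteness of the translation length $L$ is what allows one to uniformly recenter midpoints near a single basepoint on $\gamma_g$. I do not expect any further obstacle.
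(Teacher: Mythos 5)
Your proposal is correct and follows essentially the same route as the paper: pull back by $h_n^{-1}$ so that the fellow-traveling data is transferred to the single geodesic $\gamma_g$, which is recurrent simply by virtue of being a thick closed orbit, and then invoke Proposition \ref{subseq}. Your extra step of composing with powers $g^{-k_n}$ to bring the midpoints into a fixed fundamental domain for $\langle g\rangle$ is a detail the paper leaves implicit but which is genuinely needed to meet the midpoint hypothesis of Proposition \ref{subseq}, so your write-up is, if anything, slightly more complete.
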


\begin{proof}
Pulling back by $h_n^{-1}$, the sequence of geodesic segments $g_n =
h_n^{-1}(\gamma(T_n - d_n, T_n +d_n))$ satisfy the hypothesis of
Proposition \ref{subseq} with respect to the geodesic $\gamma_g$ which
is recurrent by the virtue of being thick. The proposition then
follows from Proposition \ref{subseq}.
\end{proof}

\section{Random walks and recurrence}

We recall some terminology and results from \cite{Gad-Mah}. For a
point $X \in \Teich(S)$ and $r> 0$ let $B_r(X)$ be the ball of radius
$r$ centred at $X$. Let $\gamma$ be a \teichmuller geodesic. For
points $X$ and $Y$ on $\gamma$ let $\Gamma_r(X, Y)$ be the set of
\teichmuller geodesics that pass through $B_r(X)$ and $B_r(Y)$.  By
work of Rafi \cite{Raf}, if $X$ and $Y$ lie in the thick part
$\M(\e)$, then there is an $R$, that depends on $r$ and $\e$, such
that every geodesic in $\Gamma_r(X,Y)$ fellow travels with constant
$R$ the geodesic segment $[X,Y]$ of $\gamma$.

Now let $g$ be a pseudo-Anosov element in $\text{Supp}(\mu)$ such that
$\mu^{(j)}(g) > 0$ for some $j \in \mathbb{N}$ and the invariant
\teichmuller geodesic $\gamma_g$ is in the principal stratum of
quadratic differentials. Without loss of generality, we choose a
base-point $X$ on $\gamma_g$. Following the proof of \cite[Theorem
1.1]{Gad-Mah}, for all $k \in \mathbb{N}$ large enough let $\Omega_k$
be the set of bi-infinite sample paths $\omega= (w_n)_{n \in
  \mathbb{Z}}$ such that the sequence $w_n X$ converges to uniquely
ergodic foliations $F_+$ and $F_-$ as $n \to \infty$ and $n \to -
\infty$ respectively and the \teichmuller geodesic $\gamma(F_-, F_+)$
is contained in $\Gamma_r(g^{-k}X, g^kX)$.

Let $\nu$ be the harmonic measure and $\hat{\nu}$ be the reflected
harmonic measure. Let $\sigma: \text{Mod}^{\mathbb{Z}} \to
\text{Mod}^{\mathbb{Z}}$ be the shift map. Following the proof of
\cite[Theorem 1.1]{Gad-Mah}, we get the following result

\begin{proposition}
Let $S$ and $\mu$ satisfy the hypothesis of Theorem \ref{principal}.
For any large $k$ and for almost every bi-infinite sample path
$\omega$, there is a sequence of times $n_j \to \infty$ as $j \to
\infty$ such that $\sigma^{n_j}(\omega) \in \Omega_k$.
\end{proposition}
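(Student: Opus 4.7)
The plan is to extract the conclusion from the ergodicity of the shift map $\sigma$ acting on the space of bi-infinite sample paths, once positivity of the measure of $\Omega_k$ is in hand.

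First I would verify that $\Omega_k$ has positive probability with respect to the bilateral measure $\nu \otimes \hat{\nu}$ on $\text{Mod}^{\mathbb{Z}}$. This is essentially the content of the construction used in \cite[Theorem 1.1]{Gad-Mah}: for $k$ sufficiently large, the set of forward increments whose shadow based at $X$ contains the unstable foliation of $g^k$ has positive harmonic measure, and similarly for the backward increments and the stable foliation. Since $g^k X$ and $g^{-k} X$ lie on $\gamma_g$ in the thick part, a forward/backward pair of harmonic limits landing inside the appropriate shadows forces the limit geodesic $\gamma(F_-, F_+)$ to pass through $B_r(g^{-k}X)$ and $B_r(g^k X)$, hence to lie in $\Gamma_r(g^{-k}X, g^{k}X)$. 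Unique ergodicity of the limit foliations holds almost surely by Kaimanovich--Masur, so the intersection of these events has positive measure, giving $\mathbb{P}(\Omega_k) > 0$.

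Next, the shift map $\sigma$ on $(\text{Mod}^{\mathbb{Z}}, \nu \otimes \hat{\nu})$ is measure-preserving since the increments are i.i.d., and it is ergodic by Kakutani's random ergodic theorem (or equivalently by the $0$-$1$ law for tail events of i.i.d. sequences, together with the fact that the event $\{\sigma^{n_j}(\omega) \in \Omega_k \text{ i.o.}\}$ is shift-invariant). Applying the Birkhoff ergodic theorem to the indicator $\mathbf{1}_{\Omega_k}$ gives
\[
\frac{1}{N} \sum_{n=1}^{N} \mathbf{1}_{\Omega_k}(\sigma^n \omega) \longrightarrow \mathbb{P}(\Omega_k) > 0
\]
for almost every $\omega$. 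In particular, $\sigma^n(\omega) \in \Omega_k$ for infinitely many $n > 0$, producing the required sequence $n_j \to \infty$.

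The main obstacle is the first step: one must justify that the construction behind \cite[Theorem 1.1]{Gad-Mah} really does yield a positive-measure set and is not merely a full-measure statement obtained by taking $k \to \infty$. This requires pinning down that for a fixed large $k$, the harmonic measures of the shadows of $g^{\pm k} X$ are strictly positive (which follows because $g^{\pm k}$ lies in the semigroup generated by $\text{Supp}(\mu)$ with positive probability at some step, and the harmonic measure of any nonempty open set on $\text{PMF}(S)$ hit by the random walk with positive probability is positive). The ergodicity step is then routine and the conclusion follows immediately from Birkhoff.
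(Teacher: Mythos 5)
Your argument follows essentially the same route as the paper, which simply defers to the proof of \cite[Theorem 1.1]{Gad-Mah}: one shows $\Omega_k$ has positive measure (via positivity of harmonic measure on shadows of $g^{\pm k}X$ together with Rafi's fellow-travelling criterion) and then applies ergodicity of the shift and the Birkhoff ergodic theorem. One small correction: the relevant measure on the path space $\text{Mod}^{\mathbb{Z}}$ is the push-forward of the Bernoulli measure $\mu^{\otimes \mathbb{Z}}$ on the space of increments (with respect to which the shift is conjugate to a Bernoulli shift, hence ergodic), not $\nu \otimes \hat{\nu}$, which is a measure on $\text{PMF} \times \text{PMF}$.
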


Since a countable intersection of full measure sets has full measure
we get that
\begin{proposition} \label{fellow} %
Let $S$ and $\mu$ satisfy the hypothesis of Theorem \ref{principal}.
For almost every bi-infinite sample path $\omega$ there is a sequence
$m_k \to \infty$ as $k \to \infty$ such that $\sigma^{m_k}(\omega) \in
\Omega_k$ for all $k$ large enough.
\end{proposition}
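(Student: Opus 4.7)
The plan is to deduce this from the previous proposition by a standard countable intersection / diagonal extraction argument, which is exactly the hint in the sentence preceding the statement.

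First, for each $k \in \N$ large enough so that $\Omega_k$ is defined, the previous proposition produces a full measure set $A_k$ of bi-infinite sample paths $\omega$ for which there is a sequence of shift times $n_j(\omega,k) \to \infty$ with $\sigma^{n_j(\omega,k)}(\omega) \in \Omega_k$. Since each $A_k$ has full measure and there are countably many of them, the intersection $A = \bigcap_k A_k$ also has full measure.

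Now I would fix $\omega \in A$ and construct the required sequence $m_k$ inductively. Having chosen $m_{k-1}$, I use that $\sigma^{n_j(\omega,k)}(\omega) \in \Omega_k$ for infinitely many $j$, and so in particular for some $j$ the shift time satisfies $n_j(\omega,k) > m_{k-1} + 1$. Setting $m_k$ equal to any such $n_j(\omega,k)$ guarantees both that $\sigma^{m_k}(\omega) \in \Omega_k$ and that $m_k$ is strictly increasing, hence $m_k \to \infty$. This produces the desired sequence for every $\omega$ in the full measure set $A$, which is exactly the conclusion.

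There is no real obstacle here; the work was done in the previous proposition, and the only content of this statement is the observation that although a priori the sequence of shift times depends on $k$, countable intersection of full measure sets lets one pick a single $\omega$-dependent sequence that works simultaneously for all $k$. The only thing to be mildly careful about is that $\Omega_k$ is only guaranteed to capture the fellow-traveling behaviour for $k$ sufficiently large, but this is harmless: one simply starts the inductive construction at some $k_0$ above which everything is defined, which is the reason for the phrase ``for all $k$ large enough'' in the statement.
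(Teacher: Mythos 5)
Your argument is correct and is exactly the paper's: the paper dispatches this proposition with the single remark that a countable intersection of full measure sets has full measure, and your proposal simply spells out that intersection together with the routine inductive choice of an increasing sequence $m_k$. No issues.
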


Now we get to the proof of the main recurrence result, Theorem \ref{main}:

\begin{proof}[Proof of Theorem \ref{main}]
By Proposition \ref{fellow}, for almost every sample path $\omega =
(w_n)$ there exists a sequence $m_k$ such that $\gamma_\omega$ fellow
travels $w_{m_k} (\gamma_g)$ between $w_{m_k} g^{-k} X$ and $w_{m_k}
g^k X$. Equivalently, the geodesics $w_{m_k}^{-1} (\gamma_\omega)$
fellow travels $\gamma_g$ between $[g^{-k}X, g^k X]$. The distances
$d_{\mathcal{T}}(g^{-k}X, g^k X)$ form a sequence that tends to
infinity as $k \to \infty$. So by Proposition \ref{subseq}, a further
subsequence of quadratic differentials given by the midpoints of the
fellow travelling segments of $w_{m_k}^{-1} (\gamma_\omega)$ are in
$\Qbar_{\text{pr}}(\e)$. Thus $\gamma_\omega$ is recurrent to
$\Qbar_{\text{pr}}(\e)$.
\end{proof}

\begin{proof}[Proof of Corollary \ref{no-saddles}]
By Theorem \ref{main}, for almost every sample path $\omega$ the
tracked \teichmuller geodesic $\gamma_\omega$ is recurrent to the
thick part $\Qbar_{\text{pr}}(\e)$. The projection to moduli space
$\M$ of $\gamma_\omega$ is then recurrent to the thick part $\M(\e')$
for some $\e' > 0$. By Masur's theorem \cite{Mas}, the vertical
foliation $F_s$ of $\gamma_t$ is uniquely ergodic. Moreover,
recurrence to $\Qbar_{pr}(\e)$ implies that $F_s$ has no vertical
saddle connections
\end{proof}

\begin{bibdiv}
\begin{biblist}

\bib{akp}{article}{
   author={Algom-Kfir, Yael},
   author={Kapovich, Ilya},
   author={Pfaff, Catherine},
   title={Stable Strata of Geodesics in Outer Space},
   eprint={arXiv:1706.00673},
   date={2017},
}

\bib{Gad-Mah}{article}{
   author={Gadre, Vaibhav},
   author={Maher, Joseph},
   title={The stratum of random mapping classes},
   journal={Ergodic Theory and Dynamical Systems, to appear},
   date={2017},
}

\bib{Kai-Mas}{article}{
   author={Kaimanovich, Vadim A.},
   author={Masur, Howard},
   title={The Poisson boundary of the mapping class group},
   journal={Invent. Math.},
   volume={125},
   date={1996},
   number={2},
   pages={221--264},
   issn={0020-9910},
}

\bib{Kap-Pfa}{article}{
   author={Kapovich, Ilya},
   author={Pfaff, Catherine},
   title={A train track directed random walk on ${\rm Out}(F_r)$},
   journal={Internat. J. Algebra Comput.},
   volume={25},
   date={2015},
   number={5},
   pages={745--798},
   issn={0218-1967},
}

\bib{Mah}{article}{
   author={Maher, Joseph},
   title={Random walks on the mapping class group},
   journal={Duke Math. J.},
   volume={156},
   date={2011},
   number={3},
   pages={429--468},
   issn={0012-7094},
}

\bib{maskit}{article}{
   author={Maskit, Bernard},
   title={Comparison of hyperbolic and extremal lengths},
   journal={Ann. Acad. Sci. Fenn. Ser. A I Math.},
   volume={10},
   date={1985},
   pages={381--386},
   issn={0066-1953},
}

\bib{Mas}{article}{
   author={Masur, Howard},
   title={Hausdorff dimension of the set of nonergodic foliations of a
   quadratic differential},
   journal={Duke Math. J.},
   volume={66},
   date={1992},
   number={3},
   pages={387--442},
   issn={0012-7094},
}

\bib{Raf}{article}{
   author={Rafi, Kasra},
   title={Hyperbolicity in Teichm\"uller space},
   journal={Geom. Topol.},
   volume={18},
   date={2014},
   number={5},
   pages={3025--3053},
   issn={1465-3060},
}

\bib{Riv}{article}{
   author={Rivin, Igor},
   title={Walks on groups, counting reducible matrices, polynomials, and
   surface and free group automorphisms},
   journal={Duke Math. J.},
   volume={142},
   date={2008},
   number={2},
   pages={353--379},
   issn={0012-7094},
}

\end{biblist}
\end{bibdiv}

\end{document}